\documentclass{article}[12pt] 

\usepackage{amsmath}
\usepackage{amsthm}
\usepackage{amsfonts}
\usepackage{mathrsfs}
\usepackage{stmaryrd}
\usepackage{setspace}
\usepackage{fullpage}
\usepackage{amssymb}
\usepackage{breqn}
\usepackage{enumitem}
\usepackage{bbold} 
\usepackage{authblk}
\usepackage{comment}
\usepackage{hyperref}
\usepackage{pgf,tikz}
\usepackage{graphicx}
\usepackage{subcaption}

\newtheorem{thm}{Theorem}[section]

\newtheorem{lemma}[thm]{Lemma}

\newtheorem{proposition}[thm]{Proposition}

\newtheorem*{theorem*}{Theorem}
\newtheorem*{prop*}{Proposition}

\newcommand\ex{\ensuremath{\mathrm{ex}}}

\newcommand\cA{{\mathcal A}}
\newcommand\cB{{\mathcal B}}

\newcommand\cF{{\mathcal F}}
\newcommand\cG{{\mathcal G}}

\newcommand\cN{{\mathcal N}}

\usepackage{lineno}

\newcommand{\ignore}[1]{}

\title{A note on a very abstract chromatic number and extremal problems}
\author{D\'aniel Gerbner\footnote{HUN-REN Alfr\'ed R\'enyi Institute of Mathematics, E-mail: \texttt{gerbner@renyi.hu.}}}
\date{}

\begin{document}

\maketitle

\begin{abstract}
The abstract chromatic number was introduced by Razborov and Coregliano in 2020 in using the language of model theory, and was used to extend the Erd\H os-Stone-Simonovits theorem to graphs with extra structures. A purely combinatorial version was introduced by Gerbner, Hama Karim and Kucheriya in 2026, who also showed that in addition to the asymptotic bound on the Tur\'an number, the abstract chromatic number determines the asymptotics of several other Tur\'an-type functions.

We observe that the chromatic number is used here due to its special role in determining the asymptotics of the Tur\'an number. For other extremal functions, other graph parameters may play a similar role and let us extend results in a similar fashion. We prove the appropriate generalizations and show two examples where this happens.
\end{abstract}

\section{Introduction}

A fundamental theorem in extremal graph theory is due to Tur\'an \cite{T}, who determined the largest number of edges that an $n$-vertex graph can have if it does not contain a clique $K_k$ as a subgraph. The extremal graph is the \textit{Turán graph} $T_n(k-1)$, the complete $(k-1)$-partite graph with each part of order $\lfloor n/(k-1)\rfloor$ or $\lceil n/(k-1)\rceil$.
A far-reaching generalization is the Erd\H os-Stone-Simonovits theorem \cite{ES1966,ES1946}, which states that if we forbid any graph with chromatic number $k$, then the largest number of edges is $|E(T_n(k-1))|+O(n^2)$, determining this extremal function, the so-called \textit{Tur\'an number} asymptotically in the case $k>2$.

There are several further generalizations of the above problems. One direction is to study graphs with some extra structures. For example, the vertices or the edges are ordered of $F$ and $G$, and we say that $G$ contains $F$ if $G$ contains a copy of $F$ such that the order of the vertices or edges of this copy in $G$ is the same as in $F$. The goal remains the same: given a positive integer $n$ and a vertex-ordered/edge-ordered graph $F$, we want to determine the largest number of edges in a vertex-ordered/edge-ordered graph on $n$ vertices that does not contain $F$.

Theorems similar to the Erd\H os-Stone-Simonovits theorem determine the asymptotics for several such generalizations, where the chromatic number is replaced by another graph parameter, for example the interval chromatic number in the case of vertex-ordered graphs. Coregliano and Razborov \cite{CoregRazb} introduced a common generalization, called \textit{abstract chromatic number} using a model theoretical framework.

Gerbner, Hama Karim and Kucheriya \cite{ghk} introduced a purely combinatorial version. Observe that in the above examples, the ordering does not play any role in counting the edges; in other words, we count the edges of the underlying unordered graph. The role of forbidding a vertex-ordered/edge-ordered graph is only to describe what underlying graphs are allowed. Therefore, in \cite{ghk} the basic item is a partition of all the graphs to a family $\cA$ of \textit{allowed} graphs and a family $\cF$ of \textit{forbidden graphs}.

We need to define an abstract chromatic number for such partitions. We call a partition $(\cA,\cF)$ \textit{suitable} if for all sufficiently large $n$ one of the following conditions holds:
\begin{itemize}
    \item $K_n\in\cA$. 
    \item There exists an integer $k$ such that each complete $(k-1)$-partite graph with each part of order at least $n$ is in $\cA$ but no $G\in\cA$ contains $T(n,k)$ as a subgraph. 
\end{itemize} 
Then the \textit{abstract chromatic number} of $(\cA,\cF)$ is $\infty$ if the first condition holds and is $k$ if the second condition holds. It was shown in \cite{ghk} that hereditary partitions are suitable, where a partition is \textit{hereditary} if $F\in \cA$ implies that each induced subgraph of $F$ is also in $\cA$. We say that a partition is \textit{monotone} if $F\in \cA$ implies that each subgraph of $F$ is also in $\cA$.

Another generalization from \cite{ghk} is that instead of counting edges, we can use this framework for dealing with other functions. More precisely, we consider a graph parameter $h(G)$, for example the number of edges. Then we want to maximize the \textit{Tur\'an-type function} $g(n,F)=g_h(n,F)=\max\{h(G): G \text{ is an $n$-vertex $F$-free graph}\}.$ For $h$ being the number of edges, this is the ordinary Tur\'an number. Given a suitable partition $(\cA,\cF)$, we can define $g(n,(\cA,\cF)):=\max\{h(G): G \text{ is an $n$-vertex graph in $\cA$}\}.$ Coregliano and Razborov \cite{CoregRazb} showed that the Erd\H os-Stone-Simonovits theorem implies an analogous result for graphs with extra structures. Coregliano \cite{Coreg} showed that the same holds if $h(G)$ is the number of cliques of a fixed order (smaller than the abstract chromatic number). 

Gerbner, Hama Karim and Kucheriya \cite{ghk} extended this to every Tur\'an-type function that has an Erd\H os-Stone-Simonovits-type result in ordinary graphs. More precisely, we say that $g=g_h$ is \textit{weakly $k$-ESS} if for any graph $F$ with chromatic number $k$, $g(n,F)=(1+o(1))h(T)$ for some $n$-vertex complete $(k-1)$-partite graph $T$. We say that $g$ is \textit{strongly $k$-ESS} if the above holds with $T$ being the Tur\'an graph $T(n,k-1)$. We can extend this to partitions as well. We say that $g$ is \textit{weakly $k$-ESS with respect to a partition $(\cA,\cF)$} if $g(n,(\cA,\cF))=(1+o(1))h(T)$ for a complete $(k-1)$-partite graph $T$ on $n$ vertices. We say that $g$ is \textit{strongly $k$-ESS with respect to $(\cA,\cF)$} if the above holds with $T$ being the Tur\'an graph $T(n,k-1)$. Now we are ready to state the main theorem of \cite{ghk}.

\begin{thm}[Gerbner, Hama Karim and Kucheriya \cite{ghk}]\label{mainghk}
If $g$ is a weakly (resp. strongly) $k$-ESS Turán-type function, then $g$ is also weakly (resp. strongly) $k$-ESS with respect to any suitable partition with abstract chromatic number $k$. 
\end{thm}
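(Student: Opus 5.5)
The plan is to squeeze $g(n,(\cA,\cF))$ between a lower and an upper bound that match up to a $(1+o(1))$ factor, and to obtain both from the ordinary weakly/strongly $k$-ESS property. Let $k<\infty$ be the abstract chromatic number, and fix $m$ large enough that the second condition in the definition of suitability holds with parameter $m$. The extremal function in ordinary graphs will be evaluated at a single fixed graph of chromatic number $k$, namely $F_0:=T(m,k)$ with $m\ge k$.

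\emph{Upper bound.} By suitability, no $G\in\cA$ contains $T(m,k)$. Hence every $n$-vertex $G\in\cA$ is $F_0$-free, and so
\[
g(n,(\cA,\cF))\le g(n,F_0).
\]
The graph $F_0$ has chromatic number exactly $k$. By the weak (resp.\ strong) $k$-ESS property, $g(n,F_0)=(1+o(1))h(T)$ for some $n$-vertex complete $(k-1)$-partite graph $T$ (resp.\ $T=T(n,k-1)$).

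\emph{Lower bound.} This is the step I expect to be the main obstacle. Complete $(k-1)$-partite graphs with all parts of order at least $m$ lie in $\cA$. In the strong case this settles it: for $n\ge (k-1)m$, $T(n,k-1)\in\cA$, so $g(n,(\cA,\cF))\ge h(T(n,k-1))$, and combining with the upper bound gives the strong statement.

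In the weak case, the graph $T$ from the upper bound might have some parts smaller than $m$, so $T$ need not be in $\cA$. I would handle this as follows.
\begin{itemize}
\item First use the ESS property for a second graph of chromatic number $k$, namely $F_1=K_k$. We have $g(n,F_1)\ge g(n,F_0)$, because $K_k\subseteq F_0$ means every $K_k$-free graph is also $F_0$-free. Both are asymptotic to $h$ of complete $(k-1)$-partite graphs, and every complete $(k-1)$-partite graph $T'$ is $K_k$-free, so $h(T')\le g(n,K_k)$.
\item To get from $T$ to a member of $\cA$, I would try to show that some complete $(k-1)$-partite graph with all parts of order at least $m$ achieves $(1+o(1))\max_{T'} h(T')$. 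The idea is to apply ESS to the blow-up $F_0'$ and compare with graphs on which small parts are forced to be large.
\item If that fails, I would instead use the fact that moving $O(m)$ vertices into small parts changes only $o(n)$ vertices. This would rely on a continuity property of $h$, which is likely implicit in how \cite{ghk} treats $h$.
\end{itemize}
Since I cannot rely on continuity in general, the honest fallback is to argue via the $F$-free family for $F=$ complete $(k-1)$-partite with one vertex added. Concretely, I would take $F$ to be $T(m,k)$ but use that the weak ESS statement is uniform over such $F$, so that the maximizing structure can be chosen with parts of linear size.

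Everything then reduces to
\[
g(n,(\cA,\cF))=(1+o(1))\,g(n,F_0),
\]
proved by the sandwich above.
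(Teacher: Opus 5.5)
\paragraph{The strong case is correct.} Your upper bound and your treatment of the strong case are correct. They follow the same sandwich as the paper's proof of its generalization, Theorem~\ref{main}; the paper itself only cites this statement from \cite{ghk}. Every $n$-vertex member of $\cA$ is $T(m,k)$-free. For $n\ge (k-1)m$ every part of $T(n,k-1)$ has order at least $m$, so $T(n,k-1)\in\cA$.

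\paragraph{The gap: the lower bound in the weak case.} None of your suggestions closes it. Weak $k$-ESS only gives $g(n,F)=(1+o(1))h(T_n)$ for \emph{some} complete $(k-1)$-partite $T_n$, with no control over its part sizes. Suitability only places in $\cA$ those complete $(k-1)$-partite graphs whose parts all have order at least $m$. Nothing in the definition lets you choose $T_n$ with parts of linear size, as your ``fallback'' asserts. No continuity of an arbitrary parameter $h$ is available either. Incidentally, $K_k\subseteq F_0$ gives $g(n,K_k)\le g(n,F_0)$, the reverse of the inequality you wrote.

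\paragraph{The weak lower bound is false as stated.} With the definitions exactly as stated here, it cannot be derived at all. Let $k=3$. Let $h(K_{1,n-1})=2$, $h(K_{1,n-2}\cup K_1)=1$, and $h(G)=0$ for every other $n$-vertex $G$. Let $\cA$ consist of the triangle-free graphs with no vertex adjacent to all other vertices.
\begin{itemize}
\item For every $3$-chromatic $F$ we have $g(n,F)=2=h(K_{1,n-1})$, so $g$ is weakly $3$-ESS.
\item $(\cA,\cF)$ is suitable with abstract chromatic number $3$: complete bipartite graphs with both parts of order at least $2$ lie in $\cA$, and $T(n,3)$ contains a triangle.
\item Yet $g(n,(\cA,\cF))=1$, attained by $K_{1,n-2}\cup K_1$. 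Every complete bipartite graph on $n\ge 4$ vertices has $h\in\{0,2\}$.
\end{itemize}

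\paragraph{What would fix it.} The weak half needs an extra hypothesis, and your proof should invoke it explicitly. Two options work:
\begin{itemize}
\item The $T$ in the weak ESS property can be chosen, for each fixed constant $c$, with all parts of order at least $c$.
\item Suitability places every large graph of chromatic number less than $k$ in $\cA$. This is how the paper's very abstract version, with its fixed graphs $G_n(k-1)$, avoids the issue.
\end{itemize}
Without such a hypothesis, your closing sandwich $g(n,(\cA,\cF))=(1+o(1))g(n,F_0)$ is established only in the strong case.
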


Note that \cite{ghk} lists several Tur\'an-type functions, including some spectral graph parameters and topological indices.
Finally, \cite{ghk} also contains extensions of the above result that deal with stability, supersaturation and other topics, and a way to extend this to partitions that are not quite suitable.

\subsection{The very abstract chromatic number}

Observe that the role of chromatic number in all the above results comes from its role in the Erd\H os-Stone-Simonovits theorem. Therefore, Theorem \ref{mainghk} can be applied only for Tur\'an-type functions where the chromatic number plays a similar role, as seen in the definition of $k$-ESS functions. We obtain a more general theory to get rid of this restriction.

We observe that hidden in the above definition, we start with a partition of the graphs according to the chromatic number. More generally, we will start with an arbitrary decomposition $\mathfrak{B}$ of all the graphs to families $\cB$ and $\cB_i$, $i\in \mathbb{N}$. We use the term \emph{decomposition} to distinguish it more clearly from the partition $(\cA,\cF)$. Here $\cB$ corresponds to graphs that we are not interested in; it will be clear from the example why this simplifies things. There is a natural ordering on the possible chromatic number; here, we extend this to be potentially infinite in both directions.

We may restrict our attention to host graphs not in $\cB$. We let $g_\cB(n,F)=g_{h,\cB}(n,F)=\max\{h(G): G \text{ is an $n$-vertex $F$-free graph not in $\cB$}\}.$ Given a suitable partition $(\cA,\cF)$, we can define $g_\cB(n,(\cA,\cF)):=\max\{h(G): G \text{ is an $n$-vertex graph in $\cA\setminus \cB$}\}.$

We also need graphs in $\cB_k$ corresponding to the Tur\'an graph or complete multipartite graphs. Let $G_n(i)\in \cB_i$ be an $n$-vertex graph, for every $n$ and $i$, and let $\cG$ denote their family. One more observation: the fact that we obtain only asymptotic bounds above is because the Erd\H os-Stone-Simonovits theorem gives an asymptotic bound. In general, we may obtain exact bounds as well.

We say that a Tur\'an-type function $g$ is \textit{$(\mathfrak{B},k,\cG)$-nice} if for every sufficiently large $n$, for every graph $F\in \cB_k$, we have $g_\cB(n,F)=h(G_n(k-1))$ and \textit{$(\mathfrak{B},k,\cG)$-supernice} if for every sufficiently large $n$, for every graph $F\in \cB_k$, we have $g(n,F)=h(G_n(k-1))$. 
We say that a Tur\'an-type function $g$ is \textit{asymptotically $(\mathfrak{B},k,\cG)$-nice} if for every graph $F\in \cB_k$, we have $g_\cB(n,F)=(1+o(1))h(G_n(k-1))$ and \textit{asymptotically $(\mathfrak{B},k,\cG)$-supernice} if for every graph $F\in \cB_k$, we have $g(n,F)=(1+o(1))h(G_n(k-1))$. 

A partition $(\cA,\cF)$ is \textit{suitable for $(\mathfrak{B},\cG)$} or \textit{$(\mathfrak{B},\cG)$-suitable} if there is a $k$ such that for all sufficiently large $n$, one of the following conditions holds. 
\begin{itemize}
    \item $G_n(k)\in\cA$ for every $k$. 
    \item There exists an integer $k$ such that each graph of order at least $n$ in each $\cB_i$ with $i<k$ is in $\cA$ but no $G\in\cA$ contains $G_n(k)$ as a subgraph. 
\end{itemize} Then the \textit{very abstract chromatic number} of $(\cA,\cF)$ is $\infty$ if the first condition holds and is $k$ if the second condition holds.

We note that this does not give back the abstract chromatic number exactly in the case $\cB_i$ is the family of graphs of chromatic number $i$, but the small difference between the definitions does not matter in any of the particular examples given in \cite{ghk}. 

\begin{lemma}\label{lemi} Assume that for every $m$ and $i$, if $k\ge i$ and $n$ is large enough, then $G_n(k)$ contains $G_m(i)$ as a subgraph. Then
    monotone partitions are suitable for $(\mathfrak{B},\cG)$.
\end{lemma}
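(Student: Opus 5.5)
We read the second condition in the definition of $(\mathfrak{B},\cG)$-suitability as concerning the graphs $G_m(i)$, $m\ge n$, $i<k$. The plan is to take $k$ to be the first index at which some $G_m(i)$ becomes forbidden. We then use monotonicity together with the containment hypothesis of Lemma \ref{lemi} to push this single forbidden graph up to all large $G_n(j)$ with $j\ge k$. Throughout, the key consequence of monotonicity is this: if $H\notin\cA$ and $H\subseteq G$, then $G\notin\cA$, since otherwise $G\in\cA$ would force its subgraph $H$ into $\cA$.

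\emph{Step 1 (defining $k$).} Let $I$ be the set of indices $i$ for which $G_m(i)\notin\cA$ for at least one $m$.

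If $I=\emptyset$, then $G_n(i)\in\cA$ for every $n$ and every $i$, so the first condition holds and the very abstract chromatic number is $\infty$.

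Otherwise, let $k=\min I$. This minimum exists when the indices are taken in $\mathbb{N}$ as in the definition of the decomposition $\mathfrak{B}$. If the index set were allowed to be unbounded below, this is the one point needing care: one would first check that $I$ is bounded below, or else treat the case $I$ equal to all indices separately.

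Fix $m_0$ with $G_{m_0}(k)\notin\cA$.

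\emph{Step 2 (everything below $k$ is allowed).} By minimality of $k$, for every $i<k$ and every $m$ we have $G_m(i)\in\cA$. In particular this holds for all $m\ge n$, for any $n$, which is the first half of the second condition.

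\emph{Step 3 (nothing in $\cA$ contains a large $G_n(k)$).} Apply the hypothesis of Lemma \ref{lemi} with $i=k$, $m=m_0$ and the index $k$ itself. For all sufficiently large $n$, the graph $G_n(k)$ contains $G_{m_0}(k)$ as a subgraph.

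Now let $G$ be any graph containing $G_n(k)$. Then $G$ also contains $G_{m_0}(k)\notin\cA$, so $G\notin\cA$ by the monotonicity observation above. Hence for all sufficiently large $n$, no $G\in\cA$ contains $G_n(k)$, which completes the second condition.

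The same argument with any $j\ge k$ in place of $k$ shows that $G_n(j)\notin\cA$ for large $n$. This is not needed for the definition, but it confirms that $k$ is the right threshold.

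\emph{Main obstacle.} The argument itself is short; the difficulty is matching the definition exactly. If the second condition is read literally as "every graph of order at least $n$ in $\cB_i$ is in $\cA$", then Step 2 would need every large graph of $\cB_i$, $i<k$, to be allowed. The hypothesis of Lemma \ref{lemi} only relates the representatives $G_n(i)$, so it does not give this. I would therefore state explicitly that the condition is about the graphs $G_m(i)$. Alternatively, one could verify that each large member of $\cB_i$ embeds into some $G_N(i)\in\cA$ and then apply monotonicity downward, but I would not expect that to be available in general.
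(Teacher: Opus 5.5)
Your proof is correct and is essentially the paper's argument. The paper takes $k$ maximal such that $G_n(k-1)\in\cA$ for all large $n$, then uses monotonicity to rule out every graph containing a large $G_n(k)$; under the containment hypothesis this $k$ coincides with your $\min I$. Your write-up is, if anything, more careful: it makes explicit where the containment hypothesis is used, which the paper leaves implicit. Your two caveats (existence of the extremal index when indices are unbounded below, and reading the suitability condition as a statement about the representatives $G_m(i)$) apply equally to the paper's proof, which also only checks the representatives.
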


\begin{proof}  Let $(\cA,\cF)$ be a monotone partition.
    If $G_n(k)\in \cA$ for every $n$ sufficiently large, then we are done. Otherwise, there exists a maximum value of $k$ such that for sufficiently large $n$, $G_n(k-1)\in \cA$. Then $G_n(k)\not\in \cA$ for every sufficiently large $n$, thus graphs containing $G_n(k)$ cannot be in $\cA$, completing the proof.
\end{proof}

Note that in \cite{ghk}, the analogous statement was proved for hereditary partitions. The proof uses a minor technical difference: the first property in the definition of suitable partitions is $K_n\in \cA$, as opposed to $G_n(k)\in\cA$ here. This time, we have avoided using $K_n$ in the definition, since we do not even know whether $K_n\in \cB_i$ for some $i$.

Now we introduce a special class of decompositions.
A \textit{blowup} of a graph $G$ is obtained by replacing each vertex $v$ with a set of vertices $S(v)$, and replacing each edge $uv$ with a complete bipartite graph with parts $S(u)$ and $S(v)$. A \textit{balanced blowup} is obtained if each $S(v)$ contains exactly $t$ or $t+1$ vertices for some $t$. We denote by $G\langle n\rangle$ an arbitrarily chosen $n$-vertex balanced blowup of $G$. Note that two balanced blowups may be slightly different graphs, but this small difference will not matter for us. Also note that the usual notation $G[n]$ for blowups denotes the blowup where each $S(v)$ has order $n$. Observe that the chromatic number of a graph $G$ is the smallest $k$ such that $G$ is a subgraph of a blowup of $K_k$. Similarly, we can consider blowups of other graphs.

We say that $\mathfrak{B}$ is a \textit{blowup-decomposition} if there exist graphs $B_i$ such that $\cB_i$ is the family of graphs that are subgraphs of a blowup of $B_i$, but not subgraphs of any blowup of any $B_i$ with $i<k$. 
In this case, we can introduce the strong version of nice functions: We say that a Tur\'an-type function $g$ is \textit{strongly $(\mathfrak{B},k,\cG)$-nice} if $g$ is $(\mathfrak{B},k)$-nice with $G_n(k)$ being a balanced blowup of $B_k$. The definition of \textit{strongly $(\mathfrak{B},k,\cG)$-supernice} is analogous.

We remark that blowup-decompositions show why we may need an extra family $\cB$ of graphs; this way, we may choose $B_i$ such that some graphs are not contained in any of their blowups.

We need to extend the definition of nice functions with respect to partitions. We say that $g$ is \textit{$(\mathfrak{B},k,\cG)$-nice with respect to a partition $(\cA,\cF)$} if $g(n,(\cA\setminus \cB,\cF\cup \cB)=h(G_n(k-1))$ and
\textit{$(\mathfrak{B},k,\cG)$-supernice with respect to a partition $(\cA,\cF)$} if $g(n,(\cA,\cF))=h(G_n(k-1))$.
We say that $g$ is \textit{asymptotically $(\mathfrak{B},k,\cG)$-nice with respect to a partition $(\cA,\cF)$} if $g(n,(\cA\setminus \cB,\cF\cup))=(1+o(1))h(G_n(k-1))$ and \textit{asymptotically $(\mathfrak{B},k,\cG)$-supernice with respect to a partition $(\cA,\cF)$} if $g(n,(\cA\setminus \cB,\cF\cup \cB))=(1+o(1))h(G_n(k-1))$. In the case $\mathfrak{B}$ is a blowup-decomposition, we say that $g$ is \textit{strongly $(\mathfrak{B},k,\cG)$-nice/supernice} or \textit{asymptotically strongly $(\mathfrak{B},k,\cG)$-nice/supernice with respect to $(\cA,\cF)$} if $G_n(k-1)$ is a balanced blowup of $B_{k-1}$. 

\begin{thm}\label{main}
If $g$ is a $(\mathfrak{B},k,\cG)$-nice Turán-type function, then $g$ is also $(\mathfrak{B},k,\cG)$-nice with respect to any $(\mathfrak{B},\cG)$-suitable partition with very abstract chromatic number $k$. If $g$ is a $(\mathfrak{B},k,\cG)$-supernice Turán-type function, then $g$ is also $(\mathfrak{B},k,\cG)$-supernice with respect to any $(\mathfrak{B},\cG)$-suitable partition with very abstract chromatic number $k$. The same holds for the asymptotic versions.

In the case $\mathfrak{B}$ is a blowup-decomposition and $g$ is a strongly $(\mathfrak{B},k,\cG)$-nice Turán-type function, then $g$ is also strongly $(\mathfrak{B},k,\cG)$-nice with respect to any $(\mathfrak{B},\cG)$-suitable partition with very abstract chromatic number $k$. The same holds for supernice functions and the asymptotic versions.
\end{thm}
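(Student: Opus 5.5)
We prove the theorem by establishing matching lower and upper bounds. Both bounds are read off directly from the second condition in the definition of $(\mathfrak{B},\cG)$-suitability. Let $(\cA,\cF)$ be a $(\mathfrak{B},\cG)$-suitable partition with very abstract chromatic number $k$, and fix $n_0$ such that for all $n\ge n_0$ two facts hold. First, every graph of order at least $n_0$ in every $\cB_i$ with $i<k$ lies in $\cA$. Second, no graph of $\cA$ contains $G_{n_0}(k)$ as a subgraph.

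\emph{Lower bound.} For $n\ge n_0$, the graph $G_n(k-1)$ lies in $\cB_{k-1}$ and has order $n\ge n_0$, so it lies in $\cA$. It also lies in $\cB_{k-1}$, which is disjoint from $\cB$. Hence $G_n(k-1)$ is admissible in both $g(n,(\cA,\cF))$ and $g(n,(\cA\setminus\cB,\cF\cup\cB))$, and
\[
g(n,(\cA\setminus\cB,\cF\cup\cB))\le g(n,(\cA,\cF)), \qquad h(G_n(k-1))\le g(n,(\cA\setminus\cB,\cF\cup\cB)).
\]
Strictly, only $k-1\ge$ the relevant index range is needed here; I will check that the convention $i<k$ includes $i=k-1$, which it does.

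\emph{Upper bound.} Put $F:=G_{n_0}(k)$, a fixed graph in $\cB_k$. Every $n$-vertex graph $G\in\cA$ is $F$-free. Therefore, for all sufficiently large $n$,
\[
g(n,(\cA,\cF))\le g(n,F) \qquad\text{and}\qquad g(n,(\cA\setminus\cB,\cF\cup\cB))\le g_\cB(n,F).
\]
Now apply the hypothesis to this single $F\in\cB_k$. In the nice case, $g_\cB(n,F)=h(G_n(k-1))$ for large $n$; in the supernice case, $g(n,F)=h(G_n(k-1))$. Combining with the lower bound gives equality in each case. The asymptotic versions follow identically, because $F$ is fixed, so a $(1+o(1))$ factor for this $F$ suffices.

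\emph{Blowup-decompositions.} In the strong versions, $G_n(k-1)$ is a balanced blowup of $B_{k-1}$ lying in $\cB_{k-1}$, and the same argument applies verbatim. One minor check is needed: a balanced blowup of $B_{k-1}$ really belongs to $\cB_{k-1}$ rather than to a lower class or to $\cB$. This is part of the hypothesis that $G_n(k-1)\in\cB_{k-1}$ in $\cG$.

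\emph{Main obstacle.} The only delicate point is quantifier bookkeeping. I expect to handle it as follows:
\begin{itemize}
\item The threshold ``sufficiently large $n$'' in suitability yields one fixed forbidden graph $G_{n_0}(k)$. The niceness hypothesis is then applied to that one $F$, so no uniformity over $F$ is required.
\item The asymmetry between nice and supernice (with or without excluding $\cB$) must be matched to the right version of the partition function.
\end{itemize}
The case of very abstract chromatic number $\infty$ is not covered, since the statement assumes the number is $k$.
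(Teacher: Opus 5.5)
Your proposal is correct and takes the same approach as the paper. The lower bound comes from $G_n(k-1)\in\cA\setminus\cB$ via the second suitability condition, and the upper bound from observing that every graph in $\cA$ is $G_{n_0}(k)$-free and applying the nice or supernice hypothesis to this single $F\in\cB_k$. Your explicit matching of $g_\cB(n,F)$ to the nice case and $g(n,F)$ to the supernice case is spelled out more carefully than in the paper.
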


\begin{proof}
    Let $n$ be sufficiently large. We have that $G_n(k-1)\in\cA$ by the definition of the very abstract chromatic number. This shows that $g(n,(\cA,\cF))\ge h(G_n(k-1))$, giving us the lower bound.

    We also have that for some $m$, $G_m(k)\not\in\cA$ by the definition of the very abstract chromatic number. Let $n$ be large enough and $G$ be an $n$-vertex graph in $\cA$. Then we have that $G$ is $G_m(k)$-free by the definition of suitable partitions. Together with the $(\mathfrak{B},k,\cG)$-nice or the strongly $(\mathfrak{B},k,\cG)$-nice property, we obtain the upper bound. The supernice statements simply follow from the fact that $h(G_n(k-1))$ is asymptotically the largest among all the $n$-vertex graphs avoding $G(m,k)$.

    In the case of blowup-decompositions, the statement is not stronger than the first statement, but it just describes the graph $G_n(k)$ in more detail.
\end{proof}

Note that it is possible that $\cA$ contains each $\cB_i$. In that case, the very abstract chromatic number is $\infty$, and the above theorem does not say anything.

\section{Two examples}

A large class of examples for (strongly) $k$-ESS Tur\'an-type functions came from \textit{generalized Turán problems}. Given graphs $H$ and $G$, we let $\cN(H,G)$ denote the number of copies of $H$ in $G$. Let $\ex(n,H,F):=\max\{N(H,G): \text{ $G$ is an $n$-vertex $F$-fre graph}\}$. After several sporadic results, the systematic study of these problems was initiated by Alon and Shikhelman \cite{alonsik}, see \cite{GePa} for a survey.

We have already mentioned a result of Coregliano \cite{Coreg} that showed that the abstract chromatic number determines the asymptotics of the number of cliques for graphs with extra structures, in some cases. This was extended to several other graphs $H$ in \cite{ghk} to suitable partitions. However, a necessary assumption is that the chromatic number of $H$ is less than the abstract chromatic number of the partition. Here, we consider the cases where $H$ is a complete bipartite graph, and the partition has abstract chromatic number 2, or $H$ is an odd cycle, and the partition has abstract chromatic number 3. 

Note that in these cases, we obtain bounds depending on the very abstract chromatic number. For one particular class of suitable partitions, we examine more closely how to determine the very abstract chromatic number.  The \textit{rainbow Tur\'an number} of a graph $F$ is the largest number of edges in an $n$-vertex graph that has a proper edge-coloring without a rainbow copy of $F$, i.e., a copy of $F$ with each edge having a distinct color. This notion was introduced by Keevash, Mubayi, Sudakov and Verstra{\"e}te \cite{kmsv}. Counting other subgraphs $H$ in this setting was initiated in \cite{gmmp}. The case where $H$ and $F$ are both cycles attracted the most attention \cite{bdhl,jan}. Consider the partition $(\cA,\cF)$, where $\cA$ consists of the graphs with a proper edge-coloring without a rainbow $F$. It was shown in \cite{ghk} that the abstract chromatic number of this partition is equal to the chromatic number of $F$.

\smallskip

Now we are ready to show our first example.
Given a bipartite graph $F$, let $p(F)$ denote the size of the smallest color class in proper 2-colorings of the vertices of $F$. This can be obtained by taking the smaller part in every connected component. Let $\mathfrak{B}_1$ be defined the following way. We restrict ourselves to graphs of chromatic number 3, thus place all the other graphs into $\cB$. Let $\cB_i$ denote the family of bipartite graphs $F$ with $p(F)=i$ and let $G_n(i)=K_{i,n-i}$. Then the conditions of Lemma \ref{lemi} are satisfied.

Let us show a $(\mathfrak{B},k,\cG)$-supernice Tur\'an-type function. Gerbner and Patk\'os \cite{gepat} showed that $\ex(n,K_{a,b},K_{s,t})=(1+o(1))\cN(K_{a,b},K_{s-1,n-s+1})$ if $a<s<b$. If $F$ is a bipartite with $p(F)=s$, then $F$ is a subgraph of $K_{s,t}$ for some $t$, thus the same upper bound holds for $\ex(n,K_{a,b},F)$. On the other hand, $K_{s-1,n-s+1}$ is clearly $F$-free, thus we have the same lower bound. In other words, the function $\cN(K_{a,b},G)$ is asymptotically $(\mathfrak{B},s,\cG)$-supernice. 


Let us now consider the generalized rainbow Tur\'an number. It was shown in \cite{ghk} that the abstract chromatic number of a partition defined by a graph $F$ is equal to the chromatic number of $F$. Here we show that the same holds for the very abstract chromatic number.

\begin{proposition}
    Let $F$ be a bipartite graph and $(\cA,\cF)$ be the partition, where $\cA$ consists of the graphs that have a proper edge-coloring without a rainbow $F$. Then the very abstract chromatic number of this partition is $p(F)$.
\end{proposition}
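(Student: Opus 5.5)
The plan is to check the second condition in the definition of $(\mathfrak{B},\cG)$-suitability directly with $k=p(F)$, for the decomposition $\mathfrak{B}_1$ with $G_n(i)=K_{i,n-i}$. Showing that this condition holds also shows that the first condition fails, since $G_n(k)\notin\cA$ for $n$ large. So the very abstract chromatic number is exactly $p(F)$. Two things need to be shown: \emph{(i)} every graph in $\cB_i$ with $i<p(F)$ lies in $\cA$, and \emph{(ii)} for $n$ large no $G\in\cA$ contains $K_{k,n-k}$.

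For \emph{(i)}, I would first prove the monotonicity fact that if $F'\subseteq G$ with $G$ bipartite, then $p(F')\le p(G)$. Take a proper $2$-coloring of $G$ whose smaller class has size $p(G)$. Restrict it to the vertices of $F'$. This gives a proper $2$-coloring of $F'$ in which one class has size at most $p(G)$, so $p(F')\le p(G)$.

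Now let $G\in\cB_i$ with $i<p(F)$, so $G$ is bipartite with $p(G)=i<p(F)$. By the monotonicity fact, $G$ contains no copy of $F$ at all. Therefore any proper edge-coloring of $G$, for instance one with $\Delta(G)+1$ colors by Vizing's theorem, or simply all edges distinct colors, has no rainbow $F$. Hence $G\in\cA$.

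For \emph{(ii)}, since $p(F)=k$, the graph $F$ is a subgraph of $K_{k,t}$ for some $t$: put the union of the smaller sides of the components on one side. It therefore suffices to show that every proper edge-coloring of $K_{k,m}$ with $m$ large contains a rainbow $K_{k,t}$. Then any $G$ containing $K_{k,m}$ cannot have a proper edge-coloring without a rainbow $F$, since any proper coloring of $G$ restricts to a proper coloring of its $K_{k,m}$.

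Let $A$ be the $k$-side and choose $b_1,\dots,b_t$ on the other side greedily. At step $j$, the set $C$ of colors already used on edges from $b_1,\dots,b_{j-1}$ to $A$ has size at most $k(j-1)<kt$. Each color of $C$ appears on at most $k$ edges incident to $A$, by properness at the vertices of $A$. So at most $k^2t$ candidate vertices see a color of $C$, and for $m>k^2t+t$ a fresh vertex $b_j$ with no such edge exists. The $k$ edges from $b_j$ to $A$ have pairwise distinct colors by properness at $b_j$. Hence the resulting $K_{k,t}$ is rainbow.

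The main obstacle is this greedy counting in step \emph{(ii)}, specifically making sure that properness at both ends is used. Properness at $A$ bounds how many vertices a forbidden color blocks, and properness at $b_j$ makes the new star rainbow. Once that is in place, the remaining steps are routine checks against the definition.
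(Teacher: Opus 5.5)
Your proof is correct and uses essentially the same greedy argument as the paper, which embeds $F$ directly into a properly edge-colored $K_{p(F),n-p(F)}$ by choosing the large-side vertices one at a time with fresh colors; routing it through a rainbow $K_{k,t}$ is only a cosmetic difference. You also explicitly check, via monotonicity of $p$ under subgraphs, that graphs in $\cB_i$ with $i<p(F)$ lie in $\cA$, a step the paper leaves implicit.
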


\begin{proof}
    We need to show that for sufficiently large $n$, any proper edge-coloring of $K_{p(F),n-p(F)}$ contains a rainbow copy of $F$. We embed the $p(F)$ vertices of the smallest color class of $F$ to the $p(F)$ vertices in the smaller part of $K_{p(F),n-p(F)}$, and embed the rest of the vertices one by one. Each time, we want to embed a vertex that is joined to the vertices on the other part by edges of new colors. There are fewer than $|E(F)|$ colors used earlier. Each of the $p(F)$ vertices in the smaller part are incident to at most one edge in these colors, thus there are at most $|E(F)|p(F)$ edges of those colors. If $n-p(F)>|E(F)|p(F)$, then we have a vertex that we can pick as our next vertex.
\end{proof}

This implies that for any bipartite $F$, if $a<p(F)<b$, then the largest number of copies of $K_{a,b}$ in $n$-vertex graphs with a proper edge-coloring without rainbow $F$ is $(1+o(1))\cN(K_{a,b},K_{s-1,n-s+1})$.


\smallskip

Let us present our second example. 
Let $\mathfrak{B}_2$ be defined the following way. We restrict ourselves to graphs of chromatic number 3, thus place all the other graphs into $\cB$. Let $B_i=C_{-2i-1}$. In other words, for a positive integer $k$, $\cB_{-k}$ is the family of 3-chromatic graphs $F$ with the property that $C_{2k+1}$ is the longest cycle that has a blowup containing $F$. Clearly, if $j\le k$, then any blowup of $C_{2k+1}$ is contained in some (large enough) blowup of $C_{2j+1}$. Moreover, any blowup of $C_{2k+1}$ is contained in some balanced blowup of $C_{2j+1}$. Let $G_n(k)$ denote an arbitrary $n$-vertex balanced blowup of $C_{-2k-1}$. Then the conditions of Lemma \ref{lemi} are satisfied.

We are going to show an asymptotically strongly $(\mathfrak{B}_2,k,\cG)$-supernice Tur\'an-type function. 
An influential result in generalized Tur\'an problems is the resolution of a conjecture of Erd\H os \cite{erd} by Grzesik \cite{G2012} and independently Hatami, Hladk\'y, Kr\' al, Norine, A. Razborov \cite{HHKNR2013}. They showed that $\ex(n,C_5,C_3)=\cN(C_5,C_5\langle n\rangle)$. This was extended by Grzesik and Kielak \cite{GK2018}, who showed that $\ex(n,C_{2k+1},\{C_3,C_5,\dots, C_{2k-1}\})=\cN(C_{2k+1},C_{2k+1}\langle n\rangle)$, and further extended by Beke and Janzer \cite{bj}, who showed that $\ex(n,C_{2k+1},C_{2k-1})=\cN(C_{2k+1},C_{2k+1}\langle n\rangle)$.

For a graph $F$ of chromatic number 3, let $\gamma(F)$ denote the largest $k$ such that $F$ is a subgraph of some blowup of $C_{2k-1}$. Then $\gamma(F)$ is the very abstract chromatic number of $(\cA,\cF)$, where $\cA$ is the family of $F$-free graphs.

\begin{proposition}\label{gamm}
    If $\gamma(F)=k$, then $\ex(n,C_{2k+1},F)=(1+o(1))\cN(C_{2k+1},C_{2k+1}\langle n\rangle)$.
\end{proposition}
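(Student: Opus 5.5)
The plan is to prove matching lower and upper bounds. The upper bound will reduce an arbitrary $F$ to a blowup of $C_{2k-1}$, and then pass, via the graph removal lemma, to the theorem of Beke and Janzer \cite{bj}. By definition $\gamma(F)=k$ means two things. First, $F$ is a subgraph of some blowup of $C_{2k-1}$. Second, $F$ is not a subgraph of any blowup of $C_{2k+1}$; if it were, then since blowups of $C_{2k+1}$ are contained in blowups of $C_{2k-1}$, maximality of $k$ would be contradicted only in the other direction, so this is exactly the content of the maximality of $k$.

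\emph{Lower bound.} Every subgraph of $C_{2k+1}\langle n\rangle$ is a subgraph of a blowup of $C_{2k+1}$. Since $F$ is not, $C_{2k+1}\langle n\rangle$ is $F$-free. Hence $\ex(n,C_{2k+1},F)\ge \cN(C_{2k+1},C_{2k+1}\langle n\rangle)$. Note that the right-hand side is $\Theta(n^{2k+1})$, so it suffices to prove the upper bound with an additive error $o(n^{2k+1})$.

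\emph{Upper bound.} Fix $t$ with $F\subseteq C_{2k-1}[t]$. Let $G$ be an $n$-vertex $F$-free graph; in particular $G$ is $C_{2k-1}[t]$-free. The proof has four steps.
\begin{enumerate}
\item By the standard supersaturation/blowup argument (Erd\H os's theorem on complete $(2k-1)$-partite $(2k-1)$-uniform hypergraphs, applied to the hypergraph of copies of $C_{2k-1}$ in $G$), if $G$ had $\varepsilon n^{2k-1}$ copies of $C_{2k-1}$, then for large $n$ it would contain $C_{2k-1}[t]$. Hence $G$ has $o(n^{2k-1})$ copies of $C_{2k-1}$.
\item By the graph removal lemma, we can delete $o(n^2)$ edges of $G$ to obtain a $C_{2k-1}$-free graph $G'$.
\item Each deleted edge lies in at most $O(n^{2k-1})$ copies of $C_{2k+1}$, so $\cN(C_{2k+1},G)\le \cN(C_{2k+1},G')+o(n^{2k+1})$.
\item By Beke and Janzer, $\cN(C_{2k+1},G')\le \cN(C_{2k+1},C_{2k+1}\langle n\rangle)$.
\end{enumerate}
Combining the last two steps with the fact that $\cN(C_{2k+1},C_{2k+1}\langle n\rangle)=\Theta(n^{2k+1})$ gives the claimed $(1+o(1))$ bound.

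Equivalently, the argument says that $\cN(C_{2k+1},\cdot)$ is asymptotically strongly $(\mathfrak{B}_2,\cdot,\cG)$-supernice. The proposition could then be phrased as an instance of Theorem \ref{main} for the partition of $F$-free graphs, whose very abstract chromatic number is $\gamma(F)$. However, the direct argument above is self-contained.

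The main obstacle is the removal step, that is, getting from $F$-freeness to ``few copies of $C_{2k-1}$''. I would handle it with the hypergraph Kővári--Sós--Turán/Erd\H os bound: a $(2k-1)$-uniform hypergraph with $\varepsilon n^{2k-1}$ edges contains $K^{(2k-1)}_{t,\dots,t}$. Applied to ordered copies of $C_{2k-1}$, this yields a copy of $C_{2k-1}[t]$ whose parts are automatically disjoint sets of vertices.
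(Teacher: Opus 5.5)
Your proof is correct. It has the same skeleton as the paper's: few copies of a short odd cycle, then the removal lemma, then an exact extremal theorem for graphs without odd cycles, then the observation that $o(n^2)$ deleted edges destroy only $o(n^{2k+1})$ copies of $C_{2k+1}$. It differs in which cycles are removed and which extremal theorem is invoked, so it is worth comparing the two.

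\emph{The paper's route.} A blowup of $C_{2k-1}$ lies in a blowup of every shorter odd cycle, so $F$ lies in a blowup of each $C_{2\ell+1}$ with $\ell\le k-1$. By Alon--Shikhelman, an $F$-free graph therefore has $o(n^{2\ell+1})$ copies of each of $C_3,C_5,\dots,C_{2k-1}$. The paper removes all of them and applies the older Grzesik--Kielak theorem for $\{C_3,\dots,C_{2k-1}\}$-free graphs.

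\emph{Your route.} You remove only $C_{2k-1}$ and invoke the stronger Beke--Janzer theorem. This is shorter, with a single removal step, but it relies on a deeper result. The paper's route shows that the Beke--Janzer strengthening is unnecessary for this asymptotic statement, because the removal lemma kills all shorter odd cycles at negligible cost.

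\emph{Step 1.} This is a re-derivation of the Alon--Shikhelman lemma, and the phrase ``applied to ordered copies'' needs to be made precise. Fix a partition $V_1,\dots,V_{2k-1}$ of $V(G)$; a random one keeps a constant fraction of the copies. Keep only copies $v_1v_2\cdots v_{2k-1}$ with $v_i\in V_i$ in this cyclic order. Then a complete $(2k-1)$-partite subhypergraph with parts $T_i\subseteq V_i$ genuinely yields $C_{2k-1}[t]$. Without this, you only get $(2k-1)$-sets, each spanning some $C_{2k-1}$ whose cyclic order may vary from set to set.

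\emph{Lower bound.} You state the lower bound explicitly, which the paper leaves implicit. Your justification that $F$ is not in any blowup of $C_{2k+1}$ is garbled; the remark about blowups of $C_{2k+1}$ lying in blowups of $C_{2k-1}$ is irrelevant. The fact itself is immediate: otherwise $k+1$ would be admissible in the definition of $\gamma(F)$.
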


\begin{proof} 
    Let $G$ be an $F$-free graph and $\ell\le k-1$. Then $F$ is contained in some blowup of $C_{2\ell+1}$, thus by a result of Alon and Shikhelman \cite{alonsik}, $G$ contains $o(n^{2\ell+1})$ copies of $C_{2\ell+1}$. Then we can apply the removal lemma \cite{efr} to show that we can delete $o(n^2)$ edges to remove all the copies of $C_{2\ell+1}$. Applying this to every $\ell\le k-1$, in the resulting graph $G'$ we can apply the above mentioned result of Grzesik and Kielak \cite{GK2018} to show that $\cN(C_{2\ell+1},G')\le \cN(C_{2k+1},C_{2k+1}\langle n\rangle)$. By deleting $o(n^2)$ edges, we clearly removed $o(n^{2k+1})$ copies of $C_{2k+1}$. Since the order of magnitude of $\cN(C_{2k+1},C_{2k+1}\langle n\rangle)$ is $n^{2k+1}$, we obtain the desired bound.
\end{proof}

The above proposition shows that the number of copies of $C_{2k+1}$ is an asymptotically strongly $(\mathfrak{B},k,\cG)$-supernice function. Therefore, this result extends to the case we consider any $(\mathfrak{B},\cG)$-suitable partitions. In particular, we can consider any monotone partitions, like the ones given by forbidden vertex-ordered, edge-ordered, or oriented graphs. Let us now consider the rainbow problem.

\begin{proposition}\label{neww}
    For any $k$ and $m$, if $n$ is sufficiently large, then every proper edge-coloring of $C_{2k+1}\langle n \rangle$ contains a rainbow copy of $C_{2k+1}\langle m \rangle$.
\end{proposition}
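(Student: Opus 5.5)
We prove the statement by a greedy embedding, in the same spirit as the proof for $K_{p(F),n-p(F)}$ above. The only extra work is that now every vertex of the target has neighbours in two large classes rather than in a fixed small set, so a plain counting step no longer suffices and we first pass to a well-behaved subgraph.

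Write the parts of $C_{2k+1}\langle n\rangle$ as $V_1,\dots,V_{2k+1}$ (indices mod $2k+1$). Each $V_i$ has size roughly $N=n/(2k+1)$, and $V_i$ is complete to $V_{i\pm1}$. We want to embed $C_{2k+1}\langle m\rangle$ with parts $U_1,\dots,U_{2k+1}$, each of size at most $m'=\lceil m/(2k+1)\rceil$, with $U_i$ mapped into $V_i$. The target has $M\le (2k+1)m'^2$ edges, a constant.

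\textbf{Step 1: choose the image of one part.} Fix a set $X_1\subseteq V_1$ of size $m'$ arbitrarily.

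\textbf{Step 2: choose the other parts one vertex at a time.} Go around the cycle $V_2,V_3,\dots,V_{2k+1}$. At each step we add one vertex $v$ to the current part $V_i$. Its edges to the already-chosen vertices of $V_{i-1}$ must get colours that are distinct from each other and from all colours used so far. For $i=2k+1$ the edges to the chosen vertices of $V_1$ must satisfy the same condition.

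\begin{itemize}
\item The edges from $v$ to a fixed set of chosen vertices are automatically distinct from each other, because the colouring is proper.
\item So the only danger is that such an edge repeats one of the fewer than $M$ colours already used.
\item Fix a colour $c$ and an already-chosen vertex $u$. Since $c$ is a matching, at most one edge of colour $c$ is incident to $u$, so $c$ rules out at most one candidate $v$.
\end{itemize}

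There are at most $2m'$ already-chosen neighbours and fewer than $M$ used colours, so at most $2m'M$ candidates in $V_i$ are forbidden. The new vertex $v$ must also avoid repeating colours among the edges it adds at this same step. These are edges from $v$ to two different parts, but they share the endpoint $v$, so properness again makes them distinct. Finally, $v$ must be distinct from earlier picks in $V_i$, which excludes at most $m'$ further vertices.

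Hence as long as $N>2m'M+m'$ a valid $v$ always exists. Edges between parts are only created when the second endpoint is added, so every edge of the copy receives a new colour at the moment it appears. The resulting copy of $C_{2k+1}\langle m\rangle$ is therefore rainbow.

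\textbf{Main obstacle.} The delicate point is the closing part $V_{2k+1}$. Its vertices must be adjacent to both $X_1$ and the chosen part in $V_{2k}$, and all of those edges must get fresh colours. This is handled exactly as above: each forbidden colour kills at most one candidate per fixed neighbour, so the number of excluded vertices is still bounded by a constant.

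The argument also needs care that each edge's colour is checked once, against all previously used colours, including colours used earlier in the same step. Properness covers the edges at the current vertex, so the bound of $2m'M+m'$ excluded candidates per step suffices. Consequently $n>(2k+1)(2m'M+m'+1)$ works.
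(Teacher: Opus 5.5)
Your proof is correct and is essentially the paper's argument: embed greedily vertex by vertex into the corresponding parts, using properness of the colouring so that each pair (used colour, already-embedded neighbour) forbids at most one candidate, leaving only a bounded number of excluded vertices per part. Your part-by-part ordering and the bound $2m'M+m'$ are a slightly sharper version of the paper's $(m-1)\binom{m-1}{2}+m-1<m^3$. The one thing to correct is your opening claim that a plain counting step no longer suffices and that you first pass to a well-behaved subgraph: no such subgraph is ever used, and your own argument shows the plain count works.
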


\begin{proof}
    We will embed the vertices of $C_{2k+1}\langle m \rangle$ one by one, such that the vertices of $C_{2k+1}\langle m \rangle$ in the blown-up vertex of the original $C_{2k+1}$ are mapped to the blowup of the same vertex in $C_{2k+1}\langle n \rangle$. This results in a copy of $C_{2k+1}\langle m \rangle$. To ensure that this copy is rainbow, we consider an arbitrary vertex $v$ we want to embed. It is joined to at most $m-1$ already embedded vertices. We need to make sure that the colors of the at most $m-1$ new edges obtained this way are distinct from the at most $\binom{m-1}{2}$ colors already used in the edges between the already embedded vertices. Any given already embedded vertex has at most $\binom{m-1}{2}$ incident edges of those colors. The endpoints of those edges are forbidden, this gives at most $(m-1)\binom{m-1}{2}$ forbidden vertices. In addition, the already embedded vertices are forbidden. If $n\ge (2k+1)m^3$, then each part has size at least $m^3$ vertices, thus we can find the desired vertex in the desired part.
\end{proof}

\begin{proposition} Let $F$ be a graph of chromatic number 3.
    In our blowup-decomposition, the very abstract chromatic number of a partition $(\cA,\cF)$, where $\cA$ consists of the graphs with a proper edge-coloring without a rainbow $F$, is equal to $\gamma(F)$.
\end{proposition}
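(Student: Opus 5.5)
The plan is to check the two conditions in the definition of very abstract chromatic number directly, with $k=\gamma(F)$. Two preliminary remarks help. First, the partition is monotone: if $G$ has a proper edge-coloring with no rainbow $F$, then restricting that coloring to any subgraph of $G$ is still proper and still has no rainbow $F$. Second, the conditions of Lemma \ref{lemi} hold for $\mathfrak{B}_2$, as noted before Proposition \ref{gamm}. Hence the partition is $(\mathfrak{B}_2,\cG)$-suitable, and it only remains to find the threshold index.

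\emph{Upper part: some $G_n(\cdot)$ is excluded from $\cA$.} Since $\gamma(F)=k$, the graph $F$ is a subgraph of some blowup of $C_{2k-1}$. Enlarging parts, we may take this blowup to be $C_{2k-1}\langle m\rangle$ for some $m$. By Proposition \ref{neww}, for $n$ large every proper edge-coloring of $C_{2k-1}\langle n\rangle$ contains a rainbow $C_{2k-1}\langle m\rangle$. A rainbow graph has rainbow subgraphs, so this coloring contains a rainbow $F$. Thus $C_{2k-1}\langle n\rangle\notin\cA$, and by monotonicity no graph of $\cA$ contains it. This is the balanced blowup of the corresponding $B_i$, i.e.\ the graph $G_n$ at the index corresponding to $C_{2k-1}$.

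\emph{Lower part: all graphs of the smaller classes lie in $\cA$.} Take a graph $G$ in a class $\cB_i$ below the threshold, so $G$ is a subgraph of a blowup of $C_{2\ell+1}$ with $2\ell+1>2k-1$, i.e.\ $\ell\ge k$. We show that every such blowup has a proper edge-coloring with no rainbow $F$; in fact no copy of $F$ exists at all. Indeed, if $F$ were a subgraph of a blowup of $C_{2\ell+1}$ with $\ell\ge k$, then since $F$ is 3-chromatic its image must wrap around, i.e.\ contain an odd cycle of the blowup. Then $F$ would lie in a blowup of $C_{2k+1}$, because blowups of longer odd cycles embed into blowups of shorter ones (the remark preceding $\gamma$). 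This contradicts the maximality in $\gamma(F)=k$. So $G$ is $F$-free, and any proper edge-coloring works, which gives $G\in\cA$.

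The main obstacle is this second step, specifically the indexing bookkeeping. The paper's convention $B_i=C_{-2i-1}$ with negative indices, together with the definition of $\gamma$ as the largest $k$ with $F\subseteq$ a blowup of $C_{2k-1}$, has to be matched carefully with ``$i<k$'' in the suitability definition. I would first fix the convention that larger index means shorter cycle, so that $\cB_i$ with $i<k$ consists of graphs living only in blowups of longer cycles, on which $F$ cannot appear. Then I would verify that the monotonicity of blowup containment, namely that a blowup of $C_{2\ell+1}$ is contained in a blowup of $C_{2j+1}$ when $j\le \ell$, gives exactly the ``no copy of $F$'' claim. The first step is then just Proposition \ref{neww} combined with the remark that subgraphs of rainbow graphs are rainbow.
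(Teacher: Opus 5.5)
Your proposal is correct and is essentially the paper's argument: graphs in the classes below the threshold live in blowups of longer odd cycles, which are $F$-free and hence trivially in $\cA$, while Proposition~\ref{neww} puts a rainbow balanced blowup of the threshold cycle, and thus a rainbow $F$, into every proper edge-coloring of its large balanced blowups. You are slightly more careful than the paper in two places: you state the monotonicity needed for ``no $G\in\cA$ contains $G_n(k)$'', and you use the threshold cycle $C_{2\gamma(F)-1}$ that the definition of $\gamma$ dictates, whereas the paper's proof writes $C_{2k+1}$, which is off by one. As in the paper, though, matching that threshold to the number $\gamma(F)$ under the convention $B_i=C_{-2i-1}$ is left loose.
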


\begin{proof}
    Let $k=\gamma(F)$. Then we have two possibilities. If $k=\infty$, then $C_{2k+1}\langle n\rangle =G_n(k)$ is $F$-free for every $k$. Then obviously we will not find a rainbow copy either after a proper edge-coloring, thus the very abstract chromatic number of $(\cA,\cF)$ is also $\infty$.

    If $k\neq \infty$, then each blowup of $C_{2\ell+1}$ with $\ell>k$ is $F$-free, but some blowup $C_{2k+1}\langle m\rangle$ of $C_{2k+1}$ contains $F$. Then each blowup of $C_{2\ell+1}$ with $\ell>k$ does not contain a rainbow $F$ after a proper edge-coloring. On the other hand, proper colorings of some blowup $C_{2k+1}\langle n\rangle$ of $C_{2k+1}$ contain a rainbow $C_{2k+1}\langle m\rangle$ by Proposition \ref{neww}, thus contain a rainbow $F$. This implies the statement by the definition of the very abstract chromatic number.
\end{proof}

Having this, we can extend Proposition \ref{gamm} to say that if $F$ is a graph of chromatic number 3 and $\gamma(F)=k$, then the largest number of copies of $C_{2k+1}$ in an $n$-vertex properly edge-colored graph without a rainbow $F$ is $(1+o(1))\cN(C_{2k+1},C_{2k+1}\langle n\rangle)$.

\section{Concluding remarks}

We have shown a method how to extend Tur\'an-type results to graphs with extra structures, if a graph parameter plays a key role in the original problem. We have shown two examples from the area of generalized Tur\'an problems, but it is likely that spectral results and results on some functions of degree sequences can be extended similar way.

Let us remark that in \cite{ghk}, in addition to the asymptotic results on the maximum value of $g$, it was shown that stability and supersaturation results are also extended to suitable partitions, with essentially the same proof. In each case, the results on $F$-free graphs extend to any suitable partitions. In our case, we do not have those stability or supersaturation results for $F$-free graphs, hence we do not pursue this direction. We note only that similar extension results could be proved in our setting.

We also remark that, in addition to dealing with decompositions other than the one given by the chromatic number, an important feature of our generalization is the existence of $\cB$, i.e., the observation that we can throw away unwanted graphs. The concluding remarks of \cite{ghk} mention regular Tur\'an problems, where we consider $F$-free graphs that are also regular. A further restriction on the function $g$ was used there to extend the results to such problems.
Here, we can let $\cB_k$ denote the family of regular graphs with chromatic number $k$.

In the case where we forbid $C_{2k-1}$, or a graph $F$ with $\gamma(F)=k$, and count a longer odd cycle $C_{2\ell+1}$ with $\ell>k$, Grzesik and Kielak \cite{GK2018} conjectured that still the balanced blowup of $C_{2k+1}$ is the extremal graph. This was disproved by Beke and Janzer \cite{bj} for sufficienlty large $\ell$, who showed that an unbalanced blowup contains significantly more copies of $C_{2\ell+1}$. Still, it is possible (and is conjectured in \cite{bj}) that a blowup of $C_{2k+1}$ is asymptotically the extremal graph. If that is the case, then analogous results are implied for any suitable partitions, in particular for generalized rainbow Tur\'an problems.





\bigskip

\textbf{Funding}: Research supported by the National Research, Development and Innovation Office - NKFIH under the grant KKP-133819 and by the J\'anos Bolyai scholarship.

\end{document}